\newcommand{\nc}{\newcommand}
\nc{\R}{{\mathbb{R}}}
\nc{\dist}{\mbox{\rm dist}\,}
\nc{\gph}{\mbox{\rm gph}\,}
\nc{\rge}{\mbox{\rm rge}\,}
\nc{\cl}{\mbox{\rm cl}\,}
\nc{\ri}{\mbox{\rm ri}\,}
\nc{\bd}{\mbox{\rm bd}\,}
\nc{\inter}{\mbox{\rm int}\,}
\nc{\diam}{\mbox{\rm diam}\,}
\nc{\proj}{\mbox{\rm proj}}
\nc{\reg}{\mbox{\rm reg}\,}
\nc{\lip}{\mbox{\rm lip}\,}
\nc{\dom}{\mbox{\rm dom}\,}
\nc{\epi}{\mbox{\rm epi}\,}
\nc{\mesh}{\mbox{\rm mesh}\,}
\nc{\conv}{\mbox{\rm conv}\,}
\nc{\X}{\mathbb{R}^n}
\nc{\Y}{\mathbb{R}^m}
\nc{\lF}{\textit{\tiny F}}
\nc{\lD}{\textit{\tiny D}}
\nc{\ldom}{\mbox{\rm{\tiny dom}}\,}
\newtheorem{theorem}{Theorem}[section]
\newtheorem{lemma}[theorem]{Lemma}
\newtheorem{claim}[theorem]{Claim}
\newtheorem{corollary}[theorem]{Corollary}
\theoremstyle{definition}
\newtheorem{definition}[theorem]{Definition}
\theoremstyle{remark}
\newtheorem{remark}[theorem]{Remark}
\numberwithin{equation}{section}
\begin{document}

% \title[short text for running head]{full title}
\title{Orbits of geometric descent}

%    Only \author and \address are required; other information is
%    optional.  Remove any unused author tags.

%    author one information
% \author[short version for running head]{name for top of paper}
\author{
A. Daniilidis\thanks{
    DIM-CMM, Universidad de Chile, Blanco Encalada~2120,
    piso~5, Santiago, Chile;
    {\tt http://www.dim.uchile.cl/{\raise.17ex\hbox{$\scriptstyle\sim$}}arisd/}.
    Research supported by the grant MTM2011-29064-C01 (Spain) and FONDECYT Regular No 1130176 (Chile).
    }
    \and
D. Drusvyatskiy\thanks{%
    Department of Combinatorics and Optimization,
    University of Waterloo,
    Ontario N2L 3G1, Canada;
    Department of Mathematics,
    University of Washington,
    Seattle, WA 98195;
    {\tt http://people.orie.cornell.edu/dd379/}.
    }%
    \and
  A.S. Lewis\thanks{%
  School of Operations Research and Information Engineering,
  Cornell University,
  Ithaca, New York, USA;
  {\tt http://people.orie.cornell.edu/aslewis/}.
  Research supported in part by National Science Foundation Grant DMS-1208338.
}}

%    \subjclass is required.
%\subjclass[2010]{Primary 34A12, 34A60; Secondary 49J99}

\date{}

%\dedicatory{}

%    "Communicated by" -- provide editor's name; required.
%\commby{}

%    Abstract is required.

\maketitle
\begin{abstract}
We prove that quasiconvex functions always admit descent trajectories bypassing all non-minimizing critical points.  
\end{abstract}

\section{Introduction}

To motivate the discussion, consider the classical gradient dynamical system 
\begin{equation}\label{eqn:grad_desc}
\dot{x}=-\nabla f(x), \quad\textrm{ where } f \textrm{ is a } C^1\textrm{-smooth function on }\R^d. %Gradient dynamical systems are closely related to
\end{equation}
%optimization (\cite{Absil}, \cite{Attouch} \emph{e.g.}), representing a
%continuous analogue of the steepest descent method. 
This differential equation always admits solutions starting from any point $x_0$, while
uniqueness is only assured when the gradient $\nabla f$ is Lipschitz continuous. In
this case, maximal trajectories of the system
never encounter a singularity of $f$ --- a point where the gradient $\nabla f$ vanishes --- in finite time. Instead, bounded trajectories converge
in the limit to the critical set of the function. True convergence to a limit point is a more delicate matter; it is only guaranteed under extra assumptions on the function $f$, such as convexity \cite{bruck75,DDDL-preprint} or analyticity \cite{Kurdyka98,BDLM-2010} for example.

Reparametrizing the orbits of (\ref{eqn:grad_desc}) by arclengths, at least away from singularities, we may instead seek absolutely continuous curves $x\colon [0,\eta)\to\R^d$ satisfying
\begin{equation}\label{eqn:grad_desc_param}
\dot{x}=-\frac{\nabla f(x)}{\|\nabla f(x)\|}, \qquad\textrm{ for a.e. } t\in [0,\eta),  %Gradient dynamical systems are closely related to
\end{equation}
where $\|\cdot\|$ denotes the norm on $\R^d$ and we temporarily adopt the convention $\frac{0}{0}=0$.
In comparison with (\ref{eqn:grad_desc}), this system is much more intrinsic to the geometry of the level sets of $f$.
Indeed, whenever $\nabla f$ is nonzero at a point $x$, the level set $[f=f(x)]$ is a smooth hypersurface around $x$ and the right hand side of (\ref{eqn:grad_desc_param}) coincides (up to sign) with the unit normal $\hat{n}(x)$
to the level set $[f=f(x)]$  at $x$. Consequently the orbits of the system (\ref{eqn:grad_desc_param}) may reach a
singularity in finite time and continue from there onward while not stopping at inessential singularities --- points $x$ where the gradient $\nabla
f(x)$ vanishes but the level set $[f=f(x)]$ is a hypersurface around $x$. To emphasize this distinction further, observe that the range of any smooth function can clearly be reparametrized to force a singularity at any prespecified point; on the other hand, such a reparametrization does not effect the level set portrait of the function.

%The asymptotic behaviour of the solutions is of particular interest: it is
%known since \cite{PalDem82} that such curves might fail to converge, even for
%$C^{\infty}$ functions with compact sublevel sets. On the other hand, if $f$
%is semialgebraic --more generally, definable in some o-minimal structure (see
%\cite{Dries-Miller96} for the relevant definition)-- its critical values will
%be finite (strong Sard theorem) and its (bounded) gradient orbits will be of
%finite length. This is a consequence of Kurdyka's generalization of the
%Lojasiewicz inequality, which desingularizes each isolated critical value
%$r_{0}\in f(R^{d})$ by means of a composition with a strictly increasing
%function $\psi$ defined on an interval $(r_{0},r_{0}+\delta)$. The key idea is
%that the phase portrait of the functions $f$ and $\psi\circ f$ is (locally)
%the same, yielding an identical asymptotic behaviour of the orbits of the
%gradient dynamical systems $\dot{x}=f(x)$ and $\dot{x}=(\psi\circ f)(x)$
%around $r_{0}$, the latter being free of the singularity that could produce an
%infinite length. We refer to \cite{Kurdyka98}, \cite{BDLM-2010} for more details.

A particularly important situation arises when the function $f$ is quasiconvex --- meaning its sublevel sets $[f\leq r]$ are convex. Such functions play a decisive role for example in the theory of utility functions in microeconomics; see the landmark paper \cite{AD54}.
In this case, we may even drop the smoothness assumption on $f$ and instead  seek, in analogy to (\ref{eqn:grad_desc_param}), absolutely continuous curves $x\colon [0,\eta)\to\R^d$ satisfying the inclusion 
\begin{equation}\label{eqn:diff_inc}
\dot{x}\in - N_{[f\leq f(x)]}(x), \quad\textrm{ for a.e. } t\in [0,\eta),
\end{equation}
where $N_{[f\leq f(x)]}(x)$ denotes the convex normal cone to the sublevel set. In this short note, we prove that this system (under very mild assumptions on $f$) always admits Lipschitz continuous trajectories starting from any point. Moreover maximally defined trajectories are either unbounded or converge to the global minimum of the function.\footnote{While completing this short note, we became aware of the preprint \cite{italiens}, where the authors address questions of a similar flavor.}

We should note a similarity of the differential inclusion (\ref{eqn:diff_inc}) to the classical Moreau's Sweeping process introduced in \cite{M77}; for a nice expository article see \cite{KM2000}. The standard assumption for the sweeping process to admit a solution (within an appropriate space of curves) is for the sweeping set mapping to be continuous and of bounded variation. Then one can reparametrize the problem so that the sweeping set mapping becomes Lipschitz continuous and then apply the standard ``catching up algorithm''; see \cite{KM2000} for details. In contrast, in the setting of the current manuscript the sublevel set mapping $t\mapsto [f\leq t]$ is not guaranteed to have bounded variation (see \cite[Section 4.3]{BDLM-2010} for a counter-example). Instead, the fundamental observation driving our analysis is that the polygonal curves created by the ``catching up algorithm'' are automatically self-contracted (Definition~\ref{Definition_Self-contracted}) and hence have finite length whenever they are bounded \cite{DDDL-preprint}, \cite[Theorem~3.3]{MP-1991}. This insight allows us to switch to the length parametrization and then apply the standard machinery of the theory of differential inclusions.

\section{Trajectories of convex foliations}
Throughout, we denote by $\R^d$ the $d$-dimensional Euclidean space. The corresponding inner-product and norm will be denoted by $\langle \cdot, \cdot \rangle$ and $\|\cdot\|$ respectively. For any subset $Q$ of $\R^d$, the symbols $\mathrm{int}\,Q$, $\partial Q$, and $\cl Q$ will denote the topological interior, boundary, and closure of $Q$, respectively. 
The {\em distance} of a point $x$ to $Q$ is   $$d(x,Q):=\inf_{y\in Q} d(x,y),$$ and the {\em metric projection} of $x$ onto $Q$ is $$P_Q(x):=\{y\in Q: d(x,y)=d(x,Q)\}.$$
Given points $x,y\in{\mathbb{R}}^{d}$ we define the closed
segment
\[
\lbrack x,y]:=\{tx+(1-t)y:t\in\lbrack0,1]\}.
\]
A subset $Q$ of ${\mathbb{R}}^{d}$ is {\em convex} if for every pair of points $x,y\in C$ the line segment $[x,y]$ lies in $Q$. The {\em convex hull} of any set $Q\subset\R^d$, namely the intersection of all convex sets containing $Q$, will be denoted by $\conv Q$.

The following notion, introduced in \cite[Section 6.3]{DLS-2010} and
further studied in \cite[Section 4.1]{DDDL-preprint}, is the focus of this short note.

\begin{definition}[Convex foliation]\label{Definition_foliation}
An ordered family of sets $\{S_t\}_{t\in [a,b]}$, indexed by an interval $[a,b]\subset\R$, is called a {\em convex foliation} provided the following properties hold. 
\begin{enumerate}
\item The sets $S_t$ are nonempty, closed, convex subsets of $\R^d$.
\item 
The implication 
\begin{equation*}
t_1 < t_2 \quad \Longrightarrow \quad S_{t_1}\subset\mathrm{int}\, S_{t_2} \quad\quad \textrm{ holds}.
\end{equation*}
\item 
The equation
\begin{equation*}
\bigcup_{t\in [a,b]}
\partial S_t=S_b\setminus(\mathrm{int}\,S_a)\, \quad\quad \textrm{holds}.
\end{equation*}
\end{enumerate}
For each point $x\in S_b\setminus(\mathrm{int}\,S_a)$, abusing notation slightly, we define the set $S_x$ to be the unique set of the convex foliation satisfying $x\in\partial S_x$. 
\end{definition}

\begin{remark}
We mention in passing that any convex foliation can be represented in terms of sublevel sets of an lsc quasiconvex function $f\colon \R^d\to \R\cup\{+\infty\}$ that is continuous on its domain and has no nonglobal extrema; conversely, sublevel sets of any such function naturally define a convex foliation.
\end{remark}

For any convex subset $Q$ of ${\mathbb{R}}^{d}$ and any point
$\bar{x}\in Q$ the {\em normal cone} $N_Q(\bar{x})$ has the classical description:
\[
N_{Q}(\bar{x})=\left\{  v\in{\mathbb{R}}^{d}:\ \langle v,x-\bar{x}\rangle\leq0,\ \textrm{ for all }
x\in Q\right\}  .
\]
The following is a key definition of the current work.
\begin{definition}[Trajectories of convex foliations]
A curve $\gamma$ is a {\em trajectory of a convex foliation} $\{S_t\}_{t\in [a,b]}$ if it admits an absolutely continuous parametrization $\gamma\colon I\to\R^d$
satisfying 
\begin{equation*}
\dot{\gamma}(\tau)\in - N_{S_{\gamma(\tau)}} (\gamma(\tau)) \quad \textrm{ for almost every }\tau\in I,
\end{equation*}
and for any $\tau_1,\tau_2\in I$ with $\tau_1 < \tau_2$  we have
$\gamma(\tau_2)\subset \inter S_{\gamma(\tau_1)}$.
\end{definition}
 
Our goal in this short note is to prove that trajectories of convex foliations always exist.
The following notion turns out to be instrumental. For more details see \cite{DLS-2010}.

\begin{definition}
[Self-contracted curve]\label{Definition_Self-contracted} 
A curve
$\gamma\colon I\rightarrow\mathbb{R}^{d}$ is
called \emph{self-contracted} if for any $t^{*}\in I$, the mapping 
$$t\mapsto d(\gamma(t),\gamma(t^{*})),\quad \textrm{ is nonincreasing on } I\cap (-\infty,t^{*}].$$
\end{definition}

The following result concerning lengths of self-contracted curves will be key for us. See \cite{MP-1991} for
Lipschitz curves and \cite[Theorem~3.3]{DDDL-preprint} for general (possibly
discontinuous) self-contracted curves.
\begin{lemma}[Lengths of self-contracted curves]\label{lem:len_bound}
Consider a self-contracted curve $\gamma\colon I\to\R^d$ and let $\Gamma\subset\R^d$ be the image of $I$ under $\gamma$. Then we have the estimate
$${\rm length}(\gamma)\leq K_d\,\diam(\Gamma),$$
where $K_d$ is a constant that depends only on the dimension $d$. 
\end{lemma}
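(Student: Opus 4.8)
The plan is to reduce the statement to a combinatorial estimate on finite samples of the curve and then to extract the length bound from the ``forward half-space'' structure that self-contractedness encodes. By definition $\mathrm{length}(\gamma)$ is the supremum of $\sum_{k=0}^{N-1}\|\gamma(t_{k+1})-\gamma(t_k)\|$ over finite increasing tuples $t_0<t_1<\dots<t_N$ in $I$. Setting $x_k:=\gamma(t_k)$, each such sample again obeys the discrete contraction inequalities $\|x_i-x_k\|\ge\|x_j-x_k\|$ for $i\le j\le k$, and clearly $\max_{i,j}\|x_i-x_j\|\le\diam(\Gamma)$. Hence it suffices to prove a bound of the form $\sum_{k}\|x_{k+1}-x_k\|\le K_d\,\max_{i,j}\|x_i-x_j\|$ for every finite self-contracted sequence, with $K_d$ depending on $d$ alone.

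First I would record the local geometry. Writing $v_i:=x_{i+1}-x_i$ and expanding the inequality $\|x_i-x_j\|^2\ge\|x_{i+1}-x_j\|^2$ (the contraction condition for the triple $i\le i+1\le j$) gives
\[
\langle v_i,\,x_j-x_i\rangle\ \ge\ \tfrac12\|v_i\|^2\ \ge\ 0\qquad\text{for every } j>i.
\]
Thus each step vector points into the closed half-space containing all subsequent vertices; this is the discrete counterpart of the elementary fact that $\langle\dot\gamma(t),\gamma(t)-p\rangle\le0$ whenever $t\mapsto\|\gamma(t)-p\|$ is nonincreasing.

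Next I would isolate the one inequality that comes for free. Taking $j=N$ and writing $r_k:=\|x_k-x_N\|$, the contraction property makes $(r_k)_k$ nonincreasing, so the total progress toward the terminal vertex telescopes: $\sum_k(r_k-r_{k+1})=r_0\le\diam(\Gamma)$. This controls only the radial component of the motion toward $x_N$; a path winding at almost constant distance around $x_N$ would accumulate arbitrary length while making negligible radial progress. Excluding such behaviour is the crux, and it is here that one must exploit self-contractedness relative to \emph{all} vertices simultaneously, not merely to the endpoint.

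To control the remaining tangential length I would run a covering argument over the sphere of directions. Fix a maximal $\theta$-separated family $e_1,\dots,e_M\in S^{d-1}$, where $M\le C(d)\,\theta^{-(d-1)}$, and assign each step direction $v_i/\|v_i\|$ to a nearest $e_m$. The elementary case is transparent: if every step of a curve lay within angle $\theta$ of a single direction $e$, then $\langle e,v_i\rangle\ge\cos\theta\,\|v_i\|$ for all $i$, and summing telescopes to $\langle e,x_N-x_0\rangle\le\diam(\Gamma)$, yielding length $\le\diam(\Gamma)/\cos\theta$. The genuine difficulty --- and the reason the theorem is nontrivial and dimension dependent --- is that within a single direction class the relevant steps are interleaved with steps of other classes, so the $e_m$-coordinate may oscillate and the telescoping breaks down; ruling out this oscillation is exactly what the full self-contracted hypothesis buys and is the technical heart of the cited results. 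This per-class estimate, once made quantitative and uniform, sums over the $M$ classes to give $\mathrm{length}\le K_d\,\diam(\Gamma)$ with $K_d$ of order $C(d)\,\theta^{-(d-1)}$; I would therefore invoke it directly from \cite{MP-1991} in the Lipschitz case and \cite[Theorem~3.3]{DDDL-preprint} in general.
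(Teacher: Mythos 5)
Your proposal is, in substance, the same as the paper's treatment of this lemma: the paper gives no proof at all, simply deferring to \cite{MP-1991} for Lipschitz curves and to \cite[Theorem~3.3]{DDDL-preprint} for the general case, and your argument---after correct preliminary reductions (discretization to finite samples, the half-space inequality $\langle v_i, x_j-x_i\rangle\geq\tfrac12\|v_i\|^2$, the single-direction telescoping)---likewise invokes exactly those references for the genuinely hard step. Your identification of where the real difficulty lies (controlling interleaved direction classes so that the per-class telescoping does not break) is accurate, but since that step is delegated to the same sources the paper cites, this is essentially the paper's own approach rather than an independent proof.
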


We arrive at the main result of this short note.
\begin{theorem}
[Trajectories of convex foliations exist]\label{Theorem_main}
Consider a convex foliation $\{S_t\}_{t\in [a,b]}$. Then for any point $x_0\in S_b$ there exists a self-contracted curve $\gamma\colon [0,L]\to\R^d$ that is a
trajectory of the convex foliation and satisfies $\gamma(0)=x_0$ and $\gamma(L)\in S_a$.
\end{theorem}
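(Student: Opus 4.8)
The plan is to construct the trajectory via the ``catching up algorithm'' alluded to in the introduction, producing a sequence of polygonal approximations and then passing to a limit. The key structural insight that makes everything work — and which the authors flag as the crux — is that these polygonal curves are automatically self-contracted, so Lemma~\ref{lem:len_bound} furnishes a uniform length bound independent of the discretization. Let me sketch the steps.

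\medskip

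\noindent\textbf{Step 1: Discretization via metric projections.}
First I would fix a partition $a=t_0 < t_1 < \cdots < t_N = b$ and build a polygonal curve by the sweeping/catching-up scheme. Starting from $x_0\in\partial S_b$ (or $x_0\in S_b$), I define successively $x_{k+1} = P_{S_{t_{N-k-1}}}(x_k)$, the metric projection of the current point onto the next sublevel set inward. The fundamental property of projections onto convex sets is that $x_k - x_{k+1}\in N_{S_{t_{N-k-1}}}(x_{k+1})$, so each polygonal segment points in a direction compatible with the differential inclusion $\dot\gamma\in -N_{S_\gamma}(\gamma)$. The nesting property (item~2 of Definition~\ref{Definition_foliation}) guarantees that each successive projection lands strictly inside the previous set, so the discrete trajectory genuinely descends through the foliation toward $S_a$.

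\medskip

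\noindent\textbf{Step 2: Self-contractedness and uniform length bound.}
Next I would verify that the polygonal curve just constructed is self-contracted: for any earlier vertex $x_j$, the distances $d(x_k, x_j)$ are nonincreasing as $k$ increases past $j$. This is the classical monotonicity of metric projections onto a nested decreasing family of convex sets — projecting onto a smaller convex set containing $x_j$ cannot increase the distance to $x_j$. By Lemma~\ref{lem:len_bound}, the total length of every such polygon is bounded by $K_d\,\diam(\Gamma)\le K_d\,\diam(S_b)$, a bound that is completely independent of the partition. This uniform bound is exactly what lets us switch to arclength parametrization and extract a convergent limit; it is the substitute for the bounded-variation hypothesis that the usual sweeping-process theory requires but which fails here.

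\medskip

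\noindent\textbf{Step 3: Passage to the limit and verification of the inclusion.}
Finally I would take a sequence of partitions with mesh tending to zero, reparametrize each polygon by arclength so that all are $1$-Lipschitz on a common interval $[0,L_n]$ with $L_n\le K_d\,\diam(S_b)$, and apply Arzel\`a--Ascoli (after passing to a subsequence so that $L_n\to L$) to extract a uniform limit $\gamma\colon[0,L]\to\R^d$. The limit curve is $1$-Lipschitz, hence absolutely continuous, and inherits self-contractedness since this is a closed condition under uniform convergence. It remains to check that $\gamma$ is genuinely a trajectory: that $\dot\gamma(\tau)\in -N_{S_{\gamma(\tau)}}(\gamma(\tau))$ a.e., and that $\gamma$ strictly descends, i.e. $\gamma(\tau_2)\in\inter S_{\gamma(\tau_1)}$ for $\tau_1<\tau_2$. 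The inclusion follows from upper semicontinuity (outer semicontinuity) of the normal-cone map $x\mapsto N_{S_x}(x)$ along the foliation, combined with convexity arguments (Mazur's lemma / the fact that the differential inclusion has convex, closed, locally bounded right-hand side) to pass the discrete inclusions to the limit. The endpoint condition $\gamma(L)\in S_a$ comes from tracking that the polygons reach $S_a$ and that this is preserved under the limit.

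\medskip

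\noindent\textbf{Main obstacle.}
The hardest part, I expect, is the limit passage in Step~3: ensuring that the discrete normal-cone inclusions survive the limit requires careful control of the semicontinuity of the set-valued map $x\mapsto N_{S_x}(x)$ as the indexing set $S_x$ itself varies with $x$, which is delicate precisely because the foliation need not have bounded variation. Establishing continuity (or at least outer semicontinuity) of the assignment $x\mapsto S_x$ and of the associated normal cones — and confirming that the strict-descent property $\gamma(\tau_2)\in\inter S_{\gamma(\tau_1)}$ is not destroyed in the limit — is where the real work lies. Steps~1 and~2, by contrast, are relatively mechanical given the self-contractedness bound from Lemma~\ref{lem:len_bound}.
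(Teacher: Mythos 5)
Your overall route --- catching-up discretization by metric projections, self-contractedness of the polygons, the length bound of Lemma~\ref{lem:len_bound}, Arzel\`a--Ascoli, then Mazur's lemma plus outer semicontinuity of the normal cones --- is exactly the paper's. But Step~2, which you call ``relatively mechanical,'' contains two genuine errors. First, you state self-contractedness backwards: you claim the distance from the moving point $x_k$ to an \emph{earlier} vertex $x_j$ is nonincreasing, whereas Definition~\ref{Definition_Self-contracted} requires the distance to any \emph{later} point to be nonincreasing as the parameter increases towards it (distances to past points typically \emph{grow} along these polygons). Your justification --- ``projecting onto a smaller convex set containing $x_j$'' --- cannot be repaired as stated, because the later sets do \emph{not} contain earlier vertices: $x_j$ lies on $\partial S_{\tau_j}$ while all later sets lie in $\inter S_{\tau_j}$. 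The correct statement is that every \emph{later} vertex $x_m$ ($m>j$) lies in $S_{\tau_{j+1}}$, hence $\|x_{j+1}-x_m\|=\|P_{S_{\tau_{j+1}}}(x_j)-x_m\|\le\|x_j-x_m\|$; and one must also verify the monotonicity for points interior to the segments, not just at vertices, which is the case analysis the paper carries out. Second, your uniform length bound $K_d\,\diam(S_b)$ is vacuous in general: nothing in Definition~\ref{Definition_foliation} makes the sets $S_t$ bounded (a nested family of half-spaces is a legitimate convex foliation), so $\diam(S_b)$ may be infinite. The paper instead derives a finite, partition-independent bound from the same projection monotonicity: distances to every point of $S_a$ decrease along the iteration, so all vertices lie in the ball of radius $\dist(x_0,S_a)$ about $P_{S_a}(x_0)$, giving $\diam(\Gamma_n)\le 2\,\dist(x_0,S_a)$ and hence $L_n\le K_d\,\dist(x_0,S_a)$.

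The other gap is the strict-descent property making $\gamma$ a trajectory (that $\gamma(\tau_2)\in\inter S_{\gamma(\tau_1)}$ for $\tau_1<\tau_2$), which you flag as ``where the real work lies'' but for which you offer no argument, attributing it vaguely to semicontinuity of $x\mapsto S_x$. That is not how it is proved: continuity of $t\mapsto S_t$ and outer semicontinuity of $x\mapsto N_{S_x}(x)$ (Claim~\ref{Claim_2.3}) are what pass the discrete inclusions to the limit --- a step you did identify correctly --- while strict descent requires a separate argument. The paper's is short: reparametrize the limit curve by arclength so that $\|\dot{\gamma}(s)\|=1$ a.e.; if $S_{\gamma(s)}$ were constantly equal to some set $Q$ on an interval $[s_1,s_2]$ with $s_1<s_2$, then $\delta_Q\circ\gamma\equiv 0$ there, and differentiating this identity via \cite[Theorem 10.6]{RW98} gives $\langle\dot{\gamma}(s),v\rangle=0$ for every $v\in N_Q(\gamma(s))$; combined with the already-established inclusion $-\dot{\gamma}(s)\in N_{S_{\gamma(s)}}(\gamma(s))$, this forces $\dot{\gamma}(s)=0$ a.e.\ on $[s_1,s_2]$, contradicting unit speed. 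Without this (or some substitute) you have produced a curve satisfying the differential inclusion, but not a trajectory of the foliation in the sense required by the theorem, so the proposal as it stands is incomplete.
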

\noindent\textbf{Proof.} 
Before we begin, we record the following result which will be used in the sequel. The proof is based
on a standard convexity argument and will be omitted. We defer to \cite[Definition~5.4]{RW98} for the relevant definitions of continuity of set-values mappings.
\begin{claim}
\label{Claim_2.3}If $\{S_t\}_{t\in\lbrack a,b]}$ is a convex foliation, then
the mappings $t\mapsto S_t$ and $x\mapsto N_{S_x}(x)$ are continuous in a
set-valued sense.
\end{claim}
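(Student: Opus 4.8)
The plan is to verify, for each of the two set-valued maps separately, outer and inner semicontinuity in the sense of \cite[Definition~5.4]{RW98}; continuity is the conjunction of the two. I would handle $t\mapsto S_t$ first, as it carries the foliation-specific content, and then transfer the conclusion to $x\mapsto N_{S_x}(x)$.

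For $t\mapsto S_t$ the nesting property (2) disposes of two of the four one-sided inclusions for free: $s>t$ forces $S_t\subset S_s$, so inner semicontinuity from the right and outer semicontinuity from the left are immediate. What remains is encoded in the two identities
\begin{equation*}
\bigcap_{s>t}S_s=S_t\qquad\text{and}\qquad\cl\bigcup_{s<t}S_s=S_t,
\end{equation*}
and proving these is the heart of the matter. For the first I would argue by contradiction: if $y\in\bigcap_{s>t}S_s\setminus S_t$, fix any $p\in S_t$ (so that $p\in\inter S_s$ for all $s>t$ by nesting), let $z\in\partial S_t$ be the exit point of the segment $[p,y]$, and observe that every $w\in(z,y)$ satisfies $w\in\inter S_s$ for all $s>t$ (the standard fact $[p,y)\subset\inter S_s$) yet $w\notin S_t$. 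Such a $w$ lies in $S_b\setminus\inter S_a$ but in no boundary $\partial S_{t'}$ --- interiority rules out $t'\in(t,b]$ and $w\notin S_t\supseteq S_{t'}$ rules out $t'\le t$ --- contradicting the covering property (3). The second identity follows from the same mechanism once I show $\inter S_t\subset\bigcup_{s<t}S_s$ (a point of $\inter S_t$ lying in no smaller set would again escape every $\partial S_{t'}$, violating (3)) and then use $S_t=\cl(\inter S_t)$, valid since $\inter S_t\supseteq S_a\ne\emptyset$ for $t>a$. The endpoints are one-sided and are covered by the same estimates.

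Next I would promote this to continuity of $x\mapsto S_x$ on $S_b\setminus\inter S_a=\bigcup_t\partial S_t$. Letting $\phi(x)$ denote the unique index with $x\in\partial S_{\phi(x)}$, the pivotal step is continuity of $\phi$: if $x_k\to\bar x$ with $\phi(x_k)=t_k\to t'$ along a subsequence, then outer semicontinuity of $t\mapsto S_t$ gives $\bar x\in S_{t'}$, while $\bar x\in\inter S_{t'}$ is impossible because one can trap a ball $B(\bar x,\rho)\subset S_{s_1}\subset\inter S_{t_k}$ for some $s_1<t'$ and all large $k$, forcing $x_k\in\inter S_{t_k}$ against $x_k\in\partial S_{t_k}$. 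Hence $\bar x\in\partial S_{t'}$, so $t'=\phi(\bar x)$ by uniqueness, and $x\mapsto S_x=S_{\phi(x)}$ is continuous as a composition.

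Finally, outer semicontinuity of $x\mapsto N_{S_x}(x)$ follows from the standard closed-graph argument for normal cones of convex sets: given $x_k\to\bar x$, $v_k\in N_{S_{x_k}}(x_k)$ with $v_k\to\bar v$, and any $y\in S_{\bar x}$, the inner semicontinuity of $x\mapsto S_x$ supplies $y_k\in S_{x_k}$ with $y_k\to y$, and passing to the limit in $\langle v_k,y_k-x_k\rangle\le0$ yields $\langle\bar v,y-\bar x\rangle\le0$, that is $\bar v\in N_{S_{\bar x}}(\bar x)$. I expect the no-jump identities for $t\mapsto S_t$ to be the main obstacle: property (3) is indispensable there, and it is precisely the ``standard convexity argument'' behind the claim. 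I would also flag one genuine caveat: for the normal-cone map full inner semicontinuity fails at corners of the level sets (for a foliation by scaled $\ell^1$-balls the normal cone is a single ray along an edge but full-dimensional at a vertex), so ``continuous in a set-valued sense'' must here be read as outer semicontinuity --- which is exactly the property the subsequent existence proof for the differential inclusion requires.
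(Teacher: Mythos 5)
Your proposal is correct, and it cannot be compared line-by-line with the paper's argument for the simple reason that the paper omits it, dismissing it as ``a standard convexity argument''; what you have written is a complete and accurate expansion of exactly that omitted argument. The heart of the matter is where you locate it: monotonicity (property 2) gives two of the four semicontinuities for free, and the covering property (3) is what forces the two no-jump identities $\bigcap_{s>t}S_s=S_t$ and $\cl\bigcup_{s<t}S_s=S_t$; your exit-point contradiction (a point $w\in(z,y)$ with $w\in\inter S_s$ for all $s>t$ yet $w\notin S_t$ belongs to $S_b\setminus\inter S_a$ but to no boundary $\partial S_{t'}$) is sound, as is the trap-a-ball argument for continuity of the index map $\phi$, with the endpoint case $t'=a$ handled by closedness of $S_b\setminus\inter S_a$, and the closed-graph passage to the limit for the normal cones. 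Your final caveat deserves emphasis: it is a genuine (if minor) correction to the claim as stated. Full Painlev\'e--Kuratowski continuity of $x\mapsto N_{S_x}(x)$ in the sense of \cite[Definition~5.4]{RW98} really does fail at corners --- your scaled $\ell^1$-ball foliation is a valid convex foliation and the normal cone map is not inner semicontinuous at a vertex --- so only outer semicontinuity can be asserted; this is consistent with the paper, which in the proof of Theorem~\ref{Theorem_main} invokes only the outer semicontinuity of $x\mapsto N_{S_x}(x)$, and in Corollary~\ref{Corollary_smooth bd} needs only the continuity of the unit normal field $\hat n$, which follows from outer semicontinuity once the cones are rays.
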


Consider a partition $a=\tau_n < \tau_{n-1} < \ldots < \tau_{1} < \tau_0=b$ of the interval $[a,b]$.
Now inductively define the points 
\begin{equation}
x_i=\mathrm{proj}_{S_{\tau_i}}(x_{i-1})\quad \textrm{ for } i=1,\ldots, n.
\label{proj}
\end{equation} 
and consider the polygonal line
$$\Gamma_n= \bigcup_{i=0}^{n-1}
[x_{i},x_{i+1}].$$
Let $\gamma_n\colon [0,L_n]\to\R^d$ be the arclength parametrization of $\Gamma_n$. The following is true.
\begin{claim}
The curves $\gamma_n$ are self-contracted and satisfy $L_n\leq K_d\,\dist(x_0,S_a),$ where $K_d$ is a constant depending only on the dimension $d$.
\end{claim}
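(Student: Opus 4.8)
The plan is to exploit the variational characterization of the metric projection onto a convex set together with the nestedness of the foliation. First I would record that, since $a=\tau_n<\cdots<\tau_0=b$, property (2) of Definition~\ref{Definition_foliation} yields the decreasing chain $S_{\tau_n}\subset\cdots\subset S_{\tau_0}$, so in particular every later vertex lies in every earlier sublevel set. The only tool needed is the obtuse-angle inequality for the projection~(\ref{proj}): whenever $z\in S_{\tau_{i+1}}$, the point $x_{i+1}=\proj_{S_{\tau_{i+1}}}(x_i)$ satisfies $\langle x_i-x_{i+1},\,z-x_{i+1}\rangle\le 0$.

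To prove that $\gamma_n$ is self-contracted, I would fix $t^*\in[0,L_n]$, set $z=\gamma_n(t^*)$, and show that $t\mapsto\|\gamma_n(t)-z\|$ is nonincreasing on $[0,t^*]$. Supposing $z$ lies on the segment $[x_m,x_{m+1}]$, convexity of $S_{\tau_m}$ (which contains both $x_m$ and $x_{m+1}$) gives $z\in S_{\tau_m}\subseteq S_{\tau_{i+1}}$ for every $i<m$. On a segment $[x_i,x_{i+1}]$ with $i+1\le m$, write $w(\lambda)=x_i+\lambda(x_{i+1}-x_i)$ for $\lambda\in[0,1]$; the distance $\|w(\lambda)-z\|$ is nonincreasing precisely when the affine function $\lambda\mapsto\langle z-w(\lambda),\,x_{i+1}-x_i\rangle$ stays nonnegative, so it suffices to check the two endpoints. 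At $\lambda=1$, nonnegativity of $\langle z-x_{i+1},\,x_{i+1}-x_i\rangle$ is exactly the obtuse-angle inequality, while at $\lambda=0$ the expansion $\langle z-x_i,\,x_{i+1}-x_i\rangle=\langle z-x_{i+1},\,x_{i+1}-x_i\rangle+\|x_{i+1}-x_i\|^2$ is nonnegative for the same reason. Since the values agree at shared vertices and the final partial segment $[x_m,z]$ trivially approaches $z$, the distance to $z$ decreases along all of $[0,t^*]$.

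For the length estimate I would introduce the anchor $p=\proj_{S_a}(x_0)$. Because $S_a=S_{\tau_n}$ is the smallest set, $p\in S_a\subseteq S_{\tau_{i+1}}$ for every $i$, so applying the obtuse-angle inequality with $z=p$ and expanding $\|x_i-p\|^2=\|x_i-x_{i+1}\|^2+2\langle x_i-x_{i+1},\,x_{i+1}-p\rangle+\|x_{i+1}-p\|^2$ yields the Pythagorean bound $\|x_{i+1}-p\|\le\|x_i-p\|$. Thus every vertex, and hence (by convexity of balls) the entire polygon $\Gamma_n$, lies in the closed ball centered at $p$ of radius $\|x_0-p\|=\dist(x_0,S_a)$, so $\diam(\Gamma_n)\le 2\,\dist(x_0,S_a)$. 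Combining self-contractedness with Lemma~\ref{lem:len_bound} then gives $L_n=\mathrm{length}(\gamma_n)\le K_d\,\diam(\Gamma_n)\le 2K_d\,\dist(x_0,S_a)$, and absorbing the factor $2$ into the dimensional constant establishes the claim.

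The main obstacle is the self-contractedness argument, specifically that the reference point $\gamma_n(t^*)$ generally lies in the relative interior of a segment rather than at a vertex. The clean resolution is precisely the observation that such a point still belongs to all earlier sublevel sets by convexity, so a single obtuse-angle inequality applies uniformly and the affine endpoint check propagates monotonicity across every vertex. Everything else is a routine consequence of the projection inequality and Lemma~\ref{lem:len_bound}.
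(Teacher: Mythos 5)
Your proof is correct and takes essentially the same route as the paper: the obtuse-angle inequality for the metric projection (the paper phrases it as monotonicity of the distance to points of $S_{\tau_{i+1}}$ along the normal ray through $x_{i+1}$) is used both to show that the distance to a later curve point is nonincreasing along earlier segments—since that point lies in all the relevant nested sets by convexity—and to trap all vertices in the ball of radius $\dist(x_0,S_a)$ around $\proj_{S_a}(x_0)$, after which Lemma~\ref{lem:len_bound} gives the length bound. Your explicit absorption of the factor $2$ from the diameter into the dimensional constant is in fact slightly more careful than the paper's wording.
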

\begin{proof}
Fix an index $i\in\{0,\ldots,n-1\}$. Since $S_{\tau_{i+1}}$ is convex and we
have $x_{i}-x_{i+1}\in N_{S_{\tau_{i+1}}}(x_{i+1})$, it follows that for every
fixed $x\in S_{\tau_{i+1}}$, the function
\[
\theta\mapsto\Vert x_{i+1}+\theta(x_{i}-x_{i+1})-x\Vert
,\quad\theta\geq0,
\]
is non-decreasing. In particular, for any point $x\in S_a$ we have
\[
\Vert x_{i}-x\Vert\geq\Vert x_{i+1}-x\Vert.
\]
Since $i$ was arbitrary, we deduce $\mathrm{dist}(x_{0},S_a)\geq\Vert
x_{i+1}-\proj_{S_a}(x_{0})\Vert$ and consequently all the curves $\gamma_{n}$
are contained in a ball of radius $\mathrm{dist}(x_{0},S_a)$ around
$\proj_{S_a}(x_{0})$.

Consider now real numbers $0\leq e <f <g \leq L$. In the case that $\gamma
(e)$, $\gamma(f)$, $\gamma(g)$ all lie in a single line segment $[x_{i},x_{i+1}]$, the inequality
\[
\|\gamma(g)-\gamma(f)\|\leq\| \gamma(g)-\gamma(e)\|,
\]
is obvious. Hence we may suppose that there are indices $0\leq i_{1}\leq
i_{2}\leq i_{3}\leq n$, that are not all the same, and satisfying
\[
\gamma(e)\in[x_{i_{1}},x_{{i_{1}}+1}],\quad\quad\gamma(f)\in[x_{i_{2}},x_{{i_{2}}+1}],\quad\quad\gamma(g)\in[x_{i_{3}},x_{{i_{3}}+1}].
\]
Observe that the inclusion
\[
\gamma(g)\in S_{\tau_{i}}\text{,\quad\quad holds whenever }i_{1}\leq
i<i_{2},
\]
Consequently for such indices $i$, we have
\[
\Vert x_{i}-\gamma(g)\Vert\leq\Vert x_{i_{1}}-\gamma(g)\Vert.
\]

It follows immediately that the polygonal curve $\gamma$ is self-contracted.
The bound on the length of $\Gamma_{n}$ now follows directly from
Lemma~\ref{lem:len_bound}.
\end{proof}

In light of the claim above, the lengths of the curves $\gamma_n$ are bounded by a uniform constant 
$$L_{*}:=K_d\,\dist(x_0, S_a).$$ 
We can thus extend the domains of the curves $\gamma_n$ from
$[0,L_n]$ to $[0,L_{\ast}]$ (and continue to denote by
$\gamma_n$ the new curves for simplicity) as follows:
\[
\gamma_n(s)=\gamma_n(L),\quad\text{for every }s\in\lbrack
L,L_{\ast}].
\]

Now let the mesh of the partition $a=\tau_n < \tau_{n-1} < \ldots < \tau_{1} < \tau_0=b$ tend to zero as $n$ tends to $\infty$.
Clearly
each curve $\gamma_{n}$ is
$1$-Lipschitz. It follows that the sequence
$\{\gamma_{n}\}_{n}$ is
equi-continuous and equi-bounded, and hence by the Arzela-Ascoli theorem
(see for example \cite[Section 7]{K75}) it has a subsequence, which we still denote by $\{\gamma_{n}\}_{n}$, that converges uniformly to a curve $\gamma\colon [0,L_{*}]\to\R^d$.
It follows that $\gamma$ is a self-contracted, $1$-Lipschitz continuous
curve, satisfying $\gamma(0)=x_{0}.$ In particular the inequality $\|\dot{\gamma}(s)\|\leq 1$ holds almost everywhere on $[0,L_{*}]$.
Consider now the sequence of derivatives
$\{\dot{\gamma}_{n}\}_{n}$ in the Hilbert space $L^{2}([0,L_{\ast
}],{\mathbb{R}}^{d})$ (equipped with the $\|\cdot\|_{2}$-norm).
Notice that the inequalities
$\|\dot{\gamma}_{n}\|_{2}\leq\sqrt{L_{\ast}}$ hold for all
$n$. Thus the sequence $\{\dot{\gamma}_{n}\}_{n}$ has a weakly converging
subsequence, which we still denote by $\{\dot{\gamma}_{n}\}_{n}$. A standard argument easily shows that this limit coincides with $\dot{\gamma}$ almost everywhere on $[0,L_{\ast }]$.

Mazur's Lemma then implies that a subsequence of convex combinations of the form $\sum^{K(n)}_{k=n} \alpha^n_k \dot{\gamma}_k$ converges strongly to $\dot{\gamma}$ as $n$ tends to $\infty$. Since convergence in $L^2[0,L_{*}]$ implies almost everywhere pointwise convergence, we deduce that for almost every $s\in [0,L_{*}]$, we have
\[
\Big\|\sum_{k=n}^{K(n)}\alpha_{k}^{n}\,\dot{\gamma}_{k}(s)-\dot{\gamma
}(s)\Big\|\rightarrow0,\quad\text{as }n\rightarrow\infty.
\]
Fix such a number $s\in\lbrack0,L_{\ast}]$. Then by Carath\'{e}odory's theorem
we may assume that the quantity $K(n)-(n-1)$ is bounded by $d+1$. Relabelling
we then have
\[
\lim_{n\rightarrow\infty}\sum_{i=1}^{d+1}\lambda_{i}^{n}\,\dot{\gamma}_{{i}
}^{n}(s)=\dot{\gamma}(s).
\]
Passing successively to subsequences, we may assume that
\begin{equation}
\dot{\gamma}_{{i}}^{n}(s)\rightarrow v_{i}(s)\text{,\quad for all }
i\in\{1,\ldots,d+1\}\text{,} \label{g21}%
\end{equation}
and similarly,
\[
(\lambda_{1}^{n},\ldots,\lambda_{d+1}^{n})\rightarrow(\lambda_{1}
,\ldots,\lambda_{d+1}).
\]
Consequently we obtain the inclusion
\begin{equation}
\dot{\gamma}(s)\in\conv\{v_{1},\ldots,v_{d+1}\}.
\end{equation}
By construction for each $i\in\{1,\ldots,d+1\}$ and $n\in\mathbb{N}$, there
exist real numbers  $\tau_{i_{n}}^{-}>\tau_{i_{n}}^{+}$ and corresponding
$s_{{i}_{n}}^{-}<s_{{i}_{n}}^{+}$ satisfying $S_{\gamma_{i_{n}}
(s_{{i}_{n}}^{-})}=S_{\tau_{i_{n}}^{-}}$ and $S_{\gamma_{i_{n}}(s_{{i}
_{n}}^{+})}=S_{\tau_{i_{n}}^{+}}$ and so that%
\[
\qquad\gamma_{i_{n}}(s)\in\lbrack\gamma_{i_{n}}(s_{{i}_{n}}^{-}),\gamma
_{i_{n}}(s_{{i}_{n}}^{+})],\qquad\qquad\dot{\gamma}_{i_{n}}(s)\in
-N_{S_{\tau_{{i}_{n}}^{+}}}(\gamma_{{i_{n}}}(s_{i_{n}}^{+}))
\]
Now observe $\Vert\gamma_{i_{n}}(s_{{i}_{n}}^{-})-\gamma_{i_{n}}(s_{{i}_{n}}^{+})\Vert=d(\gamma_{i_{n}}(s_{{i}_{n}}^{-}),S_{\tau_{i_{n}}^{+}})$.
According to Claim~\ref{Claim_2.3} the set-valued mapping $t\mapsto
S_t$ is continuous, whence we obtain $\Vert\gamma_{i_{n}}(s_{{i}_{n}}
^{-})-\gamma_{i_{n}}(s_{{i}_{n}}^{+})\Vert\rightarrow0$. The outer
semicontinuity of the mapping $x\mapsto N_{S_x}(x)$ (Claim~\ref{Claim_2.3}), along with \eqref{g21} immediately yields
\begin{equation}
-\dot{\gamma}(s)\in N_{S_{\gamma(s)}}(\gamma(s)),\quad\text{ for a.e. }
s\in\lbrack0,L_{\ast}]. \label{eqn:main_res}
\end{equation}

Let $L$ be the total length of the self-contracted curve $\gamma$. We now reparametrize $\gamma$ by arc-length and continue to
denote the resulting curve by $\gamma$ (since no confusion will
arise). This curve is now defined on $[0,L]$ and satisfies equation \eqref{eqn:main_res} with $\|\dot{\gamma}(s)\|=1,$ a.e.

Now to complete the proof, assume towards a
contradiction, that for some $s_{1}<s_{2}$ and all $s\in\lbrack
s_{1},s_{2}]$ the set $S_{\gamma(s)}$ is constantly equal to some set $Q$. Then we have
$(\delta_{Q}\circ\gamma)(s)=0,$ for all $s\in\lbrack s_{1},s_{2}].$ Then
by \cite[Theorem 10.6]{RW98} we have for almost all $s$ and all
$v(s)\in N_{Q}(\gamma(s))$
\[
\frac{d}{dt}(\delta_{Q}\circ\gamma)(s)=\langle\dot{\gamma}(s),v(s)\rangle=0.
\]
In view of \eqref{eqn:main_res} this yields $\|\dot{\gamma}(s)\|=0$ a.e. on
$[s_{1},s_{2}].$ This
contradicts the fact that $\gamma$ is parametrized by arclength, and concludes the proof. $\hfill\Box$

\begin{corollary}
[Smooth convex foliations]\label{Corollary_smooth bd} Consider a convex foliation $\{S_t\}_{t\in [a,b]}$ and suppose moreover that the sets $\partial S_t$ are $C^1$-smooth manifolds for each $t\in [a,b]$. Then
every trajectory $\gamma\colon I\to\R^d$ of the convex foliation can be parametrized by arclength, at which point it becomes $C^{1}$-smooth on the interior of its domain of definition.
\end{corollary}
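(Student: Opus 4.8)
The plan is to leverage the main theorem's conclusion that every trajectory is self-contracted and satisfies the differential inclusion $-\dot\gamma(s)\in N_{S_{\gamma(s)}}(\gamma(s))$ almost everywhere, and to upgrade this to genuine $C^1$-smoothness under the extra hypothesis that each boundary $\partial S_t$ is a $C^1$-smooth manifold. First I would reparametrize the given trajectory $\gamma\colon I\to\R^d$ by arclength, which is legitimate because Lemma~\ref{lem:len_bound} guarantees finite length on bounded subintervals; after this reparametrization we have $\|\dot\gamma(s)\|=1$ almost everywhere. The crucial structural gain from the smoothness hypothesis is that at each boundary point $x\in\partial S_t$ the normal cone $N_{S_t}(x)$ is a single ray $\R_{+}\,\hat n(x)$, where $\hat n(x)$ is the outward unit normal; consequently the differential inclusion collapses to the genuine \emph{equation}
\begin{equation*}
\dot\gamma(s)=-\hat n(\gamma(s))\qquad\text{for a.e. } s,
\end{equation*}
since $\|\dot\gamma(s)\|=1$ forces the nonnegative multiplier to equal $1$.

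Next I would argue that the right-hand side $x\mapsto\hat n(x)$ is \emph{continuous} on the relevant portion of the foliation. By Claim~\ref{Claim_2.3} the set-valued map $x\mapsto N_{S_x}(x)$ is continuous, and under the $C^1$ assumption it is single-valued (up to scaling), so the normalized selection $\hat n$ is a continuous vector field. The trajectory condition $\gamma(\tau_2)\in\inter S_{\gamma(\tau_1)}$ for $\tau_1<\tau_2$ ensures $\gamma$ moves strictly inward through the foliation and hence never dwells on a single leaf, so $\gamma(s)$ traverses boundary points $\partial S_{\gamma(s)}$ with $\hat n(\gamma(s))$ well-defined for all $s$ in the interior of the domain.

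The key analytic step is then a standard bootstrapping argument: $\gamma$ is Lipschitz (hence continuous), and we have shown $\dot\gamma(s)=-\hat n(\gamma(s))$ for almost every $s$ with the right-hand side a continuous function of $s$ (being the composition of the continuous field $\hat n$ with the continuous curve $\gamma$). A Lipschitz function whose almost-everywhere derivative agrees with a continuous function is automatically continuously differentiable, with $\dot\gamma(s)=-\hat n(\gamma(s))$ holding \emph{everywhere}; this follows from the fundamental theorem of calculus for absolutely continuous functions, writing $\gamma(s)=\gamma(s_0)-\int_{s_0}^{s}\hat n(\gamma(r))\,dr$ and differentiating the continuous integrand. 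I expect the main obstacle to be the careful verification that $\hat n$ is genuinely single-valued and continuous along the trajectory on the interior of its domain—in particular, confirming via the trajectory's strict monotone passage through the leaves that $\gamma(s)$ stays in the region $S_b\setminus\inter S_a$ where the foliation and its unit normal field are defined, so that $\hat n\circ\gamma$ is a bona fide continuous curve to which the bootstrap applies.
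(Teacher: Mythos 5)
Your proposal is correct and follows essentially the same route as the paper: the $C^1$ hypothesis collapses each normal cone to a single ray spanned by a continuous unit normal field $\hat n$, the arclength parametrization turns the differential inclusion into the pointwise equation $\dot\gamma(s)=-\hat n(\gamma(s))$ a.e., and the integral representation $\gamma(s)=\gamma(s_0)-\int_{s_0}^{s}\hat n(\gamma(r))\,dr$ with continuous integrand yields $C^1$-smoothness on the interior of the domain. The only minor quibble is that your appeal to Lemma~\ref{lem:len_bound} to justify the arclength reparametrization is both unnecessary and not quite available (it would require first proving that an arbitrary trajectory is self-contracted); absolute continuity alone gives locally finite length, which suffices.
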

\noindent\textbf{Proof.} Observe that for every point $x\in S_b\setminus \inter S_a$, there exists a unitary normal vector $\hat
{n}(x)\in\mathbb{R}^{d}$ satisfying
\[
N_{S_x}(x)=\mathbb{R}_{+}\hat{n}(x).
\]
The assignment $x\mapsto\hat
{n}(x)$  is a unitary continuous vector field on $ S_b\setminus \inter S_a$. On the other hand, when  $\gamma$  is parametrized by arclength, we have $\dot{\gamma }(s)=\hat{n}(\gamma(s))$ a.e. on $\gamma$'s domain of definition. Since we have the representation 
\[
\gamma(s)=\gamma(0)+ \int_{0}^{s}
\dot{\gamma}(\tau) \; d\tau=\gamma(0)+\int_{0}^{s}
\hat{n}(\gamma(\tau)) \; d\tau,
\]
we deduce that $\gamma$ is a $C^{1}$-smooth curve on the interior of its domain.$\hfill\Box$

%\begin{remark} Uniqueness of trajectories follows in any of the following cases:

%\begin{itemize}
%\item There exists a diffeomorphism $\varphi:\mathbb{R}_{+}\rightarrow
%f(\mathbb{R}^{d})$ such that $F(r)=[f\leq\varphi(r)]$ where %$f$ is convex
%continuous or $C^{1,1}$ quasiconvex ;

%\item $\bd F(x)$ are $C^{1}$ manifolds and the
%corresponding unitary normal vector field %$x\longmapsto\hat{n}(x)$ is locally
%Lipschitz continuous.
%\end{itemize}
%\end{remark}

\bigskip
\textbf{Acknowledgements: }The first author thanks David Marin (UAB) for useful discussions.
Part of this work has been realized during a research stay of the first author
at Cornell University (December 2012). This author thanks his hosts and
the host institution for hospitality.

\bigskip

%    Text of article.

%    Bibliographies can be prepared with BibTeX using amsplain,
%    amsalpha, or (for "historical" overviews) natbib style.
\bibliographystyle{amsplain}
%    Insert the bibliography data here.

\end{document}